\def\edge{\ar@{-}}
\def\dedge{\ar@{.}}
\long\def\ignore#1{#1}
\newcommand{\st}{{\rm st}}
\newcommand{\qed}{\hfill $\square$}
\newtheorem{theorem}{Theorem}[section]
\newtheorem{proposition}[theorem]{Proposition}
\newtheorem{definition}[theorem]{Definition}
\newtheorem{lemma}[theorem]{Lemma}
\def\widebar{\overline}
\def\sign{{\rm sign}}
\def\mn{{\mathbb N}}
\def\mz{{\mathbb Z}}
\newcommand{\qs}[1]{\left[#1\right]}
\def\dhom{{\rm Dhom}}
\newcommand{\curlyminor}[1]{\{\!\{{#1} \}\!\}}
\newcommand{\areduced}
{\{\widetilde{a}, \widetilde{a+1},\dots ,\widetilde{a+m-1}\}}
\newcommand{\ireduced}
{\{\widetilde{i}, \widetilde{i+1},\dots ,\widetilde{i+m-1}\}}
\newcommand{\aqminor}
{[\widetilde{a}, \widetilde{a+1},\dots ,\widetilde{a+m-1}]}
\def\qdot{(-q)^{\bullet}} 
\def\oq{\mathcal{O}_{q}}
\def\oqmmn{\oq(M_{m,n})}
\def\oqmn{\oq(M_{n})}
\def\oqgmn{\oq(G(m,n))}
\def\oqgmdashn{\oq(G(m',n))}
\def\oqgtwofour{\oq(G(2,4))}
\def\k{k}
\def\goesto{\longrightarrow}
\def\les{<_s}
\begin{document}

\title{Cyclic orders on the quantum grassmannian}

\author{T H Lenagan and E J Russell\footnote{Some of the results in this paper
will appear in the second author's PhD thesis (Edinburgh). The second author
thanks EPSRC for financial support.}}

\date{}

\maketitle

\abstract{
The quantum grassmannian is known to be a graded quantum algebra
with a straightening law when the poset of generating quantum minors is
endowed with the standard partial ordering. In this paper it is shown that
this result remains true when the ordering is subjected to cyclic shifts. The
method involves proving that noncommutative dehomogenisation is possible at
any consecutive quantum minor. 
} 

\vskip .5cm
\noindent
{\em 2000 Mathematics subject classification:} 16W35, 16P40, 16S38, 17B37,
20G42. 

\vskip .5cm
\noindent
{\em Key words:} 
Quantum matrices, quantum grassmannian, algebra with a straightening
law, noncommutative dehomogenisation

\section*{Introduction} 
The quantum grassmannian $\oqgmn$, where $m\leq n$, is the subalgebra of the
quantum matrix algebra $\oqmmn$ generated by the maximal ($m\times m$) quantum
minors (precise definitions are given in Section~\ref{section-basicdefs}).
Two useful tools that have been developed recently to study properties of
these important quantum algebras are the notion of a {\em graded quantum
algebra with a straightening law}, \cite{lr2}, and the notion of {\em
noncommutative dehomogenisation}, \cite{klr}. In \cite{lr2} it was shown that
the quantum grassmannian is a graded quantum algebra with a straightening law
and this fact was then used to study homological properties of the quantum
grassmannian; for example, the quantum grassmannian is AS-Cohen Macaulay.
Noncommutative dehomogenisation is useful for passing properties back and
forth between the quantum grassmannian and quantum matrices; for example,
$\oq(M_{m,n-m})$ is the noncommutative dehomogenisation of $\oqgmn$ at the
right-most maximal quantum minor $[n-m+1,\dots,n]$, see \cite[Corollary
4.1]{klr} and this fact is used to transfer the property of being a graded
quantum algebra with a straightening law from the quantum grassmannian to
quantum matrices and the important quantum determinantal factors in
\cite[Theorem 4.1]{lr2}. 

In this paper we show that the uesfulness of both of these tools can be
extended once one realises that one can study partial orders on the set of
generating quantum minors $\Pi$ of $\oqgmn$ other than the usual one defined
by $[i_1,\dots,i_m]\leq [j_i,\dots,j_m]$ iff $i_l\leq j_l$ for each $1\leq
l\leq n$. 

Indeed, for each $1\leq s\leq n$, one can study the {\em cyclic
order $<_s$} defined on $\{1,\dots,n\}$ by $s<_s s+1<_s\dots<_s n<_s 1
<_s\dots<_s s-1$, and use this order to induce a new partial order $\Pi_s$ on
$\Pi$. We show in Section~\ref{section-cyclicorders} that $\oqgmn$ is a graded
quantum algebra with a straightening law with respect to $\Pi_s$. 

In order to do this we first have to show in Section~\ref{section-dhom} that
noncommutative dehomogenisation is possible at the maximal element of $\Pi_s$,
a so-called {\em consecutive quantum minor}, and that the resulting
noncommutative dehomogenisation is once more $\oq(M_{m,n-m})$. 

Once this result has been established we show that one can pass the
property of being a graded quantum algebra with a straightening law from
quantum matrices to the quantum grassmannian equipped with the partial order 
$\Pi_s$.

\section{Basic definitions}\label{section-basicdefs}

In this section, we will give the basic definitions of the objects that
interest us in this paper and recall several results that we need in our
proofs. 
Throughout, $k$ will denote a base field, $q$ will be a
non-zero element of $k$ and $m$ and $n$ denote positive integers. \\

The quantisation of the coordinate ring of the affine variety $M_{m,n}$ of
$m \times n$ matrices with entries in $k$ is denoted ${\mathcal
O}_q(M_{m,n})$. It is the $k$-algebra generated by $mn$ indeterminates
$x_{ij}$, with $1 \le i \le m$ and $1 \le j \le n$, subject to the relations:
\[
\begin{array}{ll}  
x_{ij}x_{il}=qx_{il}x_{ij},&\mbox{ for }1\le i \le m,\mbox{ and }1\le j<l\le
n\: ;\\ 
x_{ij}x_{kj}=qx_{kj}x_{ij}, & \mbox{ for }1\le i<k \le m, \mbox{ and }
1\le j \le n \: ; \\ 
x_{ij}x_{kl}=x_{kl}x_{ij}, & \mbox{ for } 1\le k<i \le m,
\mbox{ and } 1\le j<l \le n \: ; \\
x_{ij}x_{kl}-x_{kl}x_{ij}=(q-q^{-1})x_{il}x_{kj}, & \mbox{ for } 1\le i<k \le
m, \mbox{ and } 1\le j<l \le n.
\end{array}
\]
To simplify, we write 
$\oqmn$ for ${\mathcal O}_q(M_{n,n})$.
The $m \times n$ matrix
${\bf X}=(x_{ij})$ is called the generic matrix associated with
${\mathcal O}_q(M_{m,n})$.
\\

As is well known, there exists a $\k$-algebra {\em transpose isomorphism}
between ${\mathcal O}_q(M_{m,n})$ and ${\mathcal O}_q(M_{n,m})$, see
\cite[Remark 3.1.3]{lr2}. Hence, from now on, we assume that $m \le n$,
without loss of generality.\\

An index pair is a pair $(I,J)$ such
that $I \subseteq \{1,\dots,m\}$ and $J \subseteq \{1,\dots,n\}$ are subsets
with the same cardinality. Hence, an index pair is given by an integer $t$
such that $1 \le t \le m$ and ordered sets 
$I=\{i_1 < \dots < i_t\} \subseteq \{1,\dots,m\}$
and $J=\{j_1 < \dots < j_t\} \subseteq \{1,\dots,n\}$. To any such index pair
we associate the quantum minor 
\[ 
[I|J] = \sum_{\sigma\in {\mathfrak S}_t}
(-q)^{\ell(\sigma)} x_{i_{\sigma(1)}j_1} \cdots x_{i_{\sigma(t)}j_t} . 
\] 


\begin{definition} -- 
The {\it quantisation of the coordinate ring of the grassmannian of
$m$-dimensional subspaces of $\k^n$}, denoted by $\oqgmn$
and informally referred to as the ($m\times n$) quantum grassmannian is the
subalgebra of $\oqmmn$ generated by the $m \times m$
quantum minors.
\end{definition} 


A maximal (that is, $m\times m$) quantum minor in $\oqmmn$ corresponds to an
index pair $[\{1,\dots,m\}|J]$ with $J=\{j_1,\dots,j_m\}\subseteq
\{1,\dots,n\}$. We call such $J$ {\em index sets} and denote the corresponding
minor by $[J]$ in what follows. Thus, such a $[J]$ is a generator of $\oqgmn$.
The set of all index sets is denoted by $\Pi_{m,n}$, or simply $\Pi$ if no
confusion may arise. As $\Pi_{m,n}$ is in one-to-one correspondence with
the set of all maximal quantum minors of $\oqmmn$, we will often identify these two sets.

When writing down an $m\times m$ quantum minor in $\oqgmn$, we will use the
convention that if a column index $j$ is greater than $n$ then $j$ is to be
read as $j-n$. For example, in $\oqgtwofour$ the minor specified by $[45]$ is
the quantum minor $[14]$. In order to stress
this point, we will use the convention that given any integer $j$ then
$\widetilde{j}$ is the integer in the set $\{1,\dots,n\}$ that is congruent to
$j$ modulo $n$. 

A quantum minor $\ireduced$ is said to be a {\bf consecutive
quantum minor} of $\oqgmn$. Recalling the convention above, we see that there
are four consecutive minors in $\oqgtwofour$: they are $[12], [23], [34]$ and
$[\widetilde{4}\,\widetilde{5}] = [14]$. 
More generally, $\oqgmn$ has $n$ consecutive minors.

Two maximal quantum minors $[I]$ and $[J]$ are said to {\em quasi-commute} if
there is an integer $c$ such that $[I][J] = q^c[J][I]$. Recall that an element
$u$ of a ring $R$ is said to be a {\em normal element} if $uR = Ru$, in which
case $uR$ is a two-sided ideal. The following lemma, first obtained in
\cite[Lemma 3.7]{kroblec}, shows that consecutive quantum minors quasi-commute with all
maximal quantum minors.

\begin{lemma}\label{lemma-normal}

Let $\ireduced$ be a consecutive quantum minor in the quantum grassmannian 
$\oqgmn$.
Then $\ireduced$ quasi-commutes with each of the generating
quantum minors of $\oqgmn$. In particular, each  consecutive quantum minor 
is a normal
element of $\oqgmn$. \qed 
\end{lemma}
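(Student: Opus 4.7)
The plan is to prove $[C] := \ireduced$ quasi-commutes with every generating maximal minor $[J]$ of $\oqgmn$. Normality is then immediate, since if $[C][J_\ell] = q^{c_\ell} [J_\ell][C]$ for each generator $[J_\ell]$, every monomial $f$ in the generators satisfies $[C] f = \lambda_f f [C]$ for some $\lambda_f \in k^\times$, so $[C]\oqgmn = \oqgmn [C]$ is a two-sided ideal.

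I would split the quasi-commutation claim into a \emph{straight} case, where $i + m - 1 \leq n$ and $C = \{i, i+1, \ldots, i+m-1\}$ in the ordinary order on $\{1, \ldots, n\}$, and a \emph{wrapping} case. In the straight case, I would apply the quantum Pl\"ucker relations to express $[C][J]$ as a signed sum indexed by column-swaps between $C$ and $J$. Because $C$ is a maximal consecutive block in the linear order, every non-trivial swap of a $c \in C$ with a $j \in J \setminus C$ either produces an index set that violates strict increase (so that its term vanishes after antisymmetrisation) or cancels against a partner term, leaving only the principal term $q^c[J][C]$, with $c$ computed explicitly from $|J \cap \{1, \ldots, i-1\}|$ and $|J \cap \{i+m, \ldots, n\}|$. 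A more hands-on alternative is induction on $m$ via the quantum Laplace expansion of $[J]$ along its first row together with the four defining relations of $\oqmmn$.

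The main obstacle is the wrapping case, in which $C = \{1, \ldots, w\} \cup \{i, \ldots, n\}$ with $w := i + m - 1 - n$, and the cyclic consecutiveness of $C$ is invisible to the ordinary-linear-order defining relations of $\oqmmn$. One line of attack is direct application of the quantum Pl\"ucker relations to $[C][J]$: although $C$ is not consecutive in the linear order, its complement $\{w+1, \ldots, i-1\}$ is, so the combinatorics governing which column-swaps vanish or cancel is controlled by a ``dual'' version of the straight-case argument. An alternative attack proceeds via a quantum Laplace expansion of $[C]$ along the column split $\{1, \ldots, w\} \sqcup \{i, \ldots, n\}$, writing $[C]$ as a signed sum of products of two smaller maximal consecutive minors of rectangular sub-algebras of $\oqmmn$; each factor can be pushed past $[J]$ via the straight case, and the technical heart of the proof is then to verify that the resulting scalars from the various summands assemble into a single exponent $c$ with $[C][J] = q^c[J][C]$.
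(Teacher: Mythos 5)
The paper offers no proof of this lemma: it is quoted from Krob and Leclerc \cite[Lemma 3.7]{kroblec} (whose argument goes through quasi-determinant identities), so your attempt has to be judged on its own. Your reduction of normality to quasi-commutation with the generators is fine, and your ``straight'' case can be made to work---but only by the route you mention in passing, not by the one you lead with. After using Proposition~\ref{proposition-qmuir} to strip out $C\cap J$, the sets $C$ and $J$ are disjoint, so \emph{no} exchange term in a Pl\"ucker straightening of $[C][J]$ acquires a repeated index: the mechanism ``violates strict increase, hence vanishes after antisymmetrisation'' never fires, and the asserted pairwise cancellation of the surviving terms is precisely the content of the theorem rather than something you have exhibited. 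What does work is the elementary observation that for an interval $C$ the maximal minor $[C]$ commutes with every $x_{kl}$ with $l\in C$ (it is the quantum determinant of the square subalgebra on those columns) and satisfies $[C]x_{kl}=q^{\pm 1}x_{kl}[C]$ for $l$ outside the interval; since each monomial of $[J]$ uses each column of $J$ exactly once, $[C][J]=q^{c}[J][C]$ with $c$ depending only on how $J$ splits around $C$.

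The genuine gap is the wrapping case, which is the whole point of the lemma. Attack (a) is not an argument: the complement of $C$ has cardinality $n-m$, is not an element of $\Pi_{m,n}$, and there is no ``dual'' Pl\"ucker relation in $\oqgmn$ to appeal to. Attack (b) fails at its first step: the quantum Laplace expansion of $[C]$ along $\{1,\dots,w\}\sqcup\{i,\dots,n\}$ produces factors $[R_1\mid\{1,\dots,w\}]$ and $[R_2\mid\{i,\dots,n\}]$ whose row sets $R_1,R_2$ are arbitrary subsets of $\{1,\dots,m\}$; these are not maximal minors of $\oqmmn$, so the straight case says nothing about them, and they need not quasi-commute with $[J]$ at all. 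Indeed, already at the level of generators, $x_{kl}$ fails to quasi-commute with $[J]$ whenever $l\notin J$ but $\min J<l<\max J$ (e.g.\ $x_{12}[13]-q^{-1}[13]x_{12}=q^{-1}(q-q^{-1})x_{11}x_{13}x_{22}\neq 0$ in $\oq(M_{2,3})$), and such columns $l\le w$ occur as soon as $\min J<l$ with $l\notin J$ (take $m=3$, $n=5$, $C=\{5,1,2\}$, $J=\{1,3,4\}$, $l=2$). Even if each factor did quasi-commute, the exponent would depend on the row set $R_1$, and you give no reason why the exponents from the two factors should sum to the same constant across all terms of the Laplace expansion. As it stands, the case that actually needs proving is unproven; you should either follow the Krob--Leclerc computation or the weak-separation approach of Leclerc--Zelevinsky/Scott, both of which handle cyclic intervals directly.
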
 

Quantum analogues of the classical Pl\"ucker relations are available and are
stated in the following theorem. 

\begin{theorem}\label{theorem-gqpr}
{\em (Generalised Quantum Pl\"ucker Relations for Quantum
grassmannians)}\\
Let  $J_1,J_2,K \subseteq \{ 1, 2, \ldots
,  n \} $ be such that $|J_1|, |J_2| \leq m$ and 
$|K| = 2m-|J_1| - |J_2| > m$.  Then
\begin{equation*}
\sum_{K' \sqcup K'' = K }^{} (-q)^{ \ell(J_1;K')  +\ell(K';K'') 
+\ell(K'';J_2) } [J_1 \sqcup K' ] [K'' \sqcup J_2 ] = 0,
\end{equation*}
where
$ \ell(I;J) = | \{ (i,j) \in I \times J : i >j \}|$.
\end{theorem}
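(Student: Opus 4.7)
The plan is to induct on $|K|$. The base case is $|K|=m+1$, where the identity reduces to the basic (``short'') quantum Pl\"ucker relation for $\oqgmn$; this can be derived directly from the commutation relations in $\oqmmn$ together with the $q$-antisymmetry of a quantum minor under swapping a pair of columns, and it is well known.

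For the inductive step with $|K|>m+1$, I would let $\kappa=\min K$ and $L=K\setminus\{\kappa\}$, then split the sum into the subsum over partitions with $\kappa\in K'$ and the subsum over partitions with $\kappa\in K''$. In the first case, writing $K'=\{\kappa\}\sqcup K'_0$ and $J_1'=J_1\cup\{\kappa\}$ (in sorted order), the minor $[J_1\sqcup K']$ is $[J_1'\sqcup K'_0]$, and $(K'_0,K'')$ ranges over all partitions of $L$ of the correct sizes. The key identity is
\begin{equation*}
\ell(J_1;K')+\ell(K';K'') = \ell(J_1;\{\kappa\})+\ell(J_1';K'_0)+\ell(K'_0;K''),
\end{equation*}
which holds because $\kappa=\min K$ forces $\ell(\{\kappa\};K'_0)=0=\ell(\{\kappa\};K'')$. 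Hence the first subsum equals $(-q)^{\ell(J_1;\{\kappa\})}$ times the generalised Pl\"ucker expression for the triple $(J_1',J_2,L)$, which vanishes by induction since $|L|>m$. A symmetric argument---absorbing $\kappa$ into $J_2$ to form $J_2'=J_2\cup\{\kappa\}$---handles the second subsum, giving a constant $q$-scalar (of the form $(-q)^{|J_2|-|J_1|+1+\ell(\{\kappa\};J_2)}$) times the Pl\"ucker expression for $(J_1,J_2',L)$, again zero by induction. Adding the two subsums gives the theorem. The cardinality constraints $|J_1'|,|J_2'|\leq m$ hold automatically because $|K|>m$ already forces $|J_1|+|J_2|<m$.

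The main obstacle is the bookkeeping of $\ell$-exponents and the verification that each subsum factors as a $q$-scalar times a lower-size Pl\"ucker expression. The choice $\kappa=\min K$ is essential: it forces the cross-terms $\ell(\{\kappa\};K'_0)$ and $\ell(\{\kappa\};K'')$ to vanish in the first case, while in the second case $\ell(K';\{\kappa\})$ and $\ell(K''_0;\{\kappa\})$ equal the fixed cardinalities $|K'|$ and $|K''_0|$, independent of the partition, so the excess $q$-factor pulls outside the sum. Any other choice of $\kappa$ would produce partition-dependent contributions and obstruct the clean reduction; the bilinearity of $\ell$ on disjoint unions then reduces the residual verification to a routine rearrangement.
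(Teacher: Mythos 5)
Your induction is sound, but note first that the paper does not actually prove this statement: its ``proof'' is a citation to \cite[Theorem 2.1]{klr}, where the generalised relations are established directly. Your route is therefore genuinely different: a strong induction on $|K|$ that splits the sum according to whether $\kappa=\min K$ lies in $K'$ or in $K''$, absorbs $\kappa$ into $J_1$ (resp.\ $J_2$), and identifies each subsum as a fixed power of $(-q)$ times the generalised Pl\"ucker expression for $(J_1\cup\{\kappa\},J_2,K\setminus\{\kappa\})$ (resp.\ $(J_1,J_2\cup\{\kappa\},K\setminus\{\kappa\})$). The bookkeeping checks out: since $\kappa$ is minimal, $\ell(\{\kappa\};K'_0)=\ell(\{\kappa\};K'')=0$ in the first case, while in the second case $\ell(K';\{\kappa\})=|K'|=m-|J_1|$ and $\ell(K''_0;\{\kappa\})=|K''_0|=m-|J_2|-1$ are constants, which gives exactly your prefactors $(-q)^{\ell(J_1;\{\kappa\})}$ and $(-q)^{|J_2|-|J_1|+1+\ell(\{\kappa\};J_2)}$; the cardinality and size constraints are preserved because $|K|\geq m+2$ forces $|J_1|+|J_2|\leq m-2$; and partitions for which $\kappa$ already lies in $J_1$ or $J_2$ contribute nothing, as the corresponding minors have a repeated index. (As a sanity check, for $m=2$, $J_1=J_2=\emptyset$, $K=\{1,2,3,4\}$ in $\oqgtwofour$, your split recovers precisely the two three-term quantum Pl\"ucker relations displayed in Section~1.) What your approach buys is an elementary reduction of the general relation to the short case $|K|=m+1$; what it does not buy is that short case itself, which is where essentially all the content lies. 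Your claim that it follows ``directly from the commutation relations and $q$-antisymmetry'' understates matters: the short relation requires a genuine argument (e.g.\ via the quantum exterior algebra or a quantum Laplace expansion) or a citation to the literature, which is exactly the role the citation of \cite{klr} plays for the whole theorem in the paper.
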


\begin{proof}\cite[Theorem 2.1]{klr}\qed\\\end{proof} 

Often, when using this result, it is not important to know exactly which power
of $-q$ occurs. In this case, we simply write $\qdot$ to denote the relevant
power of $-q$. 

We will also need a version of the Quantum Muir's Law of Extensible Minors.
This result was first obtained by Krob and Leclerc, \cite[Theorem
3.4]{kroblec}, with a proof involving quasi-determinants. The version below,
which is sufficient for our needs, is taken from \cite[Proposition 1.3]{lr3},
and is adapted for use in the quantum grassmannian.

\begin{proposition} \label{proposition-qmuir} 
Let $I_s, J_s$, for $1\leq s\leq d$, be $m$-element subsets of $\{1,\dots,n\}$
and let $c_s\in\k$ be such that $\sum_{s=1}^d c_s[I_s][J_s]=0$ 
in $\oqgmn$. Suppose that
$P$ is a subset of $\{1,\dots,n\}$ such that $(\cup_{s=1}^d I_s) \cup
(\cup_{s=1}^d J_s)\subseteq P$ and let 
$\widebar{P}$ denote $\{1,\dots,n\}\backslash
P$. Then 
\[
\sum_{s=1}^d c_s[I_s\sqcup \widebar{P}][J_s\sqcup \widebar{P}]=0.
\]
holds in $\oqgmdashn$, where $m' = m+\#\widebar{P}$.\qed 
\end{proposition}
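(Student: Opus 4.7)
My plan is to prove Proposition~\ref{proposition-qmuir} by induction on $\#\widebar{P}$, reducing to the case where $\widebar{P}$ consists of a single element $r \in \{1,\dots,n\}\setminus P$. The base case $\#\widebar{P}=0$ is trivial, and a single-step extension suffices because, after extending by one element $r$, the hypothesis of the proposition is preserved with $P$ replaced by $P\cup\{r\}$ and the complement shrunk by one; iterating the one-step result then yields the general statement.

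For the single-step case I would pass to the ambient quantum matrix algebra $\oq(M_{m+1,n})$, identifying the generators of $\oqgmn$ with the $m\times m$ minors $[\{1,\dots,m\}\mid I]$ of the first $m$ rows, and identifying each extended minor $[I\sqcup\{r\}]$ of $\oq(G(m+1,n))$ with the $(m{+}1)\times(m{+}1)$ minor $[\{1,\dots,m+1\}\mid I\sqcup\{r\}]$. The key technical tool is the quantum Laplace expansion along the bottom row (cf.\ \cite{kroblec}), which gives an expression of the form
\[
[I\sqcup\{r\}] \;=\; \sum_{j\in I\sqcup\{r\}} (-q)^{\alpha(j;I\sqcup\{r\})}\,x_{m+1,j}\cdot[\{1,\dots,m\}\mid (I\sqcup\{r\})\setminus\{j\}],
\]
for explicit integer exponents $\alpha(j;\cdot)$ determined by the position of $j$.

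The main computation substitutes this expansion into $\sum_s c_s[I_s\sqcup\{r\}][J_s\sqcup\{r\}]$, producing a double sum indexed by pairs $(i,j)$. One then commutes the $x_{m+1,i},x_{m+1,j}$ to collect them, using the quantum commutation relations between bottom-row generators and $m$-minors of the first $m$ rows; this converts each inner summand into a $\qdot$-multiple of an expression of the shape $\sum_s c_s[I_s'][J_s']$, where $I_s',J_s'\subseteq P\cup\{r\}$ are obtained from $I_s,J_s$ by a bounded, uniform-in-$s$ modification. When the modification lands inside $\oqgmn$ one invokes the hypothesis directly; when it produces columns in $\{r\}$, one first applies the generalised quantum Pl\"ucker relations of Theorem~\ref{theorem-gqpr} to reduce to the previous case, after which the hypothesis forces vanishing. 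Summing over $(i,j)$ then yields the desired identity.

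The main obstacle is the $q$-power bookkeeping: each commutation of $x_{m+1,k}$ past an $m$-minor, and each Pl\"ucker reduction, introduces a precise power of $-q$ that must match the exponent $\alpha(j;\cdot)$ coming from the Laplace expansion in order for the collected sums to reduce cleanly to multiples of $\sum_s c_s[I_s][J_s]$. The fact that $r\notin P$ is crucial here, because it forces $r$ to occupy the same position in every $I_s\sqcup\{r\}$ and every $J_s\sqcup\{r\}$, which makes the bookkeeping uniform across $s$ and permits the inner sums to be pulled out of the index-$s$ summation. Once this is accomplished the induction closes and the proposition follows.
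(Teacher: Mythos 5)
The paper itself does not prove this proposition at all: it is quoted from Krob and Leclerc \cite[Theorem 3.4]{kroblec}, whose proof goes through quasi-determinant identities (the quantum Jacobi ratio theorem), in the grassmannian form of \cite[Proposition 1.3]{lr3}. Your attempt is therefore necessarily an independent argument. The reduction to a one-column extension $\widebar{P}=\{r\}$ by induction on $\#\widebar{P}$ is sound, but the one-column step, which is the entire content of the result, is not actually carried out, and the two assertions you lean on to make it plausible both fail. First, it is not true that $r\notin P$ ``forces $r$ to occupy the same position in every $I_s\sqcup\{r\}$'': take $P=\{1,2,3,7\}$, $r=5$, $I_1=\{1,2\}$ and $I_2=\{3,7\}$; then $r$ is the third element of $I_1\sqcup\{r\}$ but the second element of $I_2\sqcup\{r\}$. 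Hence the exponents $\alpha(j;I_s\sqcup\{r\})$ in the Laplace expansion, and the index set of that expansion itself, genuinely depend on $s$, so the double sum over $(i,j)$ cannot be interchanged with the sum over $s$ and regrouped into ``uniform-in-$s$ modifications'' of the hypothesis.

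Second, even granting some regrouping, the cross terms in which the deleted column is some $i\in I_s$ rather than $r$ involve minors $[\{1,\dots,m\}\mid (I_s\setminus\{i\})\sqcup\{r\}]$ whose column sets contain $r\notin P$; the hypothesis $\sum_s c_s[I_s][J_s]=0$ says nothing about such linear combinations, and ``applying the generalised quantum Pl\"ucker relations to reduce to the previous case'' is not a defined operation here: Theorem~\ref{theorem-gqpr} rewrites a single product of two maximal minors, it does not remove a column from every term of a linear combination while preserving the coefficients $c_s$. So the mechanism by which these terms are supposed to vanish is missing, and this is exactly where the known proofs must do real work (Krob--Leclerc express the extended minor, up to a power of $q$ and conjugation by the complementary minor in a localisation, in terms of the original one, so that the identity transfers term by term). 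As it stands your argument is a plan whose decisive step --- the collapse of the double sum --- is asserted rather than established.
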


This result is used, for example, 
when it is necessary to write down a commutation relation
between two maximal quantum minors $[I]$ and $[J]$, say. The usefulness of the
result is that one may delete the common members of the index pairs $I$ and
$J$ to establish the commutation relation. It will often be the case that we
then only have to find commutation relations for two minors involving at most
$4$ indices, and here we may use the following 
well-known relations in $\oqgtwofour$  
which can easily be checked from the defining relations of quantum matrices. 
\[[ij][ik] = q[ik][ij],\quad [ik][jk] = q[jk][ik],\quad
\mbox{\rm for $i<j<k$}\]
and
\[
\qs{14}\qs{23}  =  \qs{23}\qs{14}, \quad [12][34] =q^2[34][12], \quad
\qs{13}\qs{24}  =  \qs{24}\qs{13} 
        + \left( q-q^{-1} \right) \qs{14}\qs{23}.\]
        There is also a 
Quantum Pl\"{u}cker relation
$
\qs{12}\qs{34} - q\qs{13}\qs{24} +q^2\qs{14}\qs{23} = 0
$. 
This  Quantum Pl\"{u}cker relation may be rewritten as 
$
\qs{34}\qs{12} - q^{-1}\qs{24}\qs{13} +q^{-2}\qs{23}\qs{14} = 0
$ 
and one can also check that 
$
\qs{13}\qs{24}  = q^2 \qs{24}\qs{13} 
+ \left( q^{-1} -q\right) \qs{12}\qs{34}.
$

\section{Dehomogenisation at a consecutive quantum \newline minor}\label{section-dhom}

Noncommutative dehomogenisation was introduced in \cite{klr} in order to pass
properties back and forth between quantum matrices and the quantum
grassmannian. Here, we recall the basic idea. Let $A$ be an $\mn$-graded
$k$-algebra and let $x$ be a homogeneous normal nonzero divisor sitting in
degree one. Then the Ore localisation at the powers of $x$ exists and is a
$\mz$-graded algebra. The (noncommutative) dehomogenisation of $A$ at $x$ is
defined to be the degree zero part of this localisation, see \cite{klr} for
the details. 

The aim in this section is to show that the dehomogenisation of $\oqgmn$ at
any consecutive quantum minor is isomorphic to $\oq(M_{m,n-m})$. This result
is known for the quantum minor $[n-m+1,\dots,n]$, by \cite[Theorem 4.1]{klr}.
The proof for a general consecutive minor $\aqminor$ follows the same route as
in this theorem, but the technicalities are a little more complicated. First,
we identify a suitable generating set for the dehomogenisation.


\begin{lemma}\label{lemma-generators}

The $k$-algebra 
\[
\dhom(\oqgmn, \aqminor)
\] 
is generated by the elements 
\begin{eqnarray*}
\lefteqn{
\curlyminor{j,a,a+1,\dots,
\widehat{(a+m-i)},\dots, a+m-1}:=
}\\
&&[j,\widetilde{a},\widetilde{a+1},\dots,\widehat{(a+m-i)},\dots,
\widetilde{a+m-1}]
\aqminor^{-1},
\end{eqnarray*}
where $j\in\{1,\dots,n\}\backslash\areduced$ 
and $i\in\{1,\dots,m\}$.
\end{lemma}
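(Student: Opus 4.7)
The plan is to realise $\dhom(\oqgmn,\aqminor)$ as generated by the elements $[I][\aqminor]^{-1}$ with $I\in\Pi_{m,n}$, and then use a single quantum Pl\"ucker relation to reduce each of these to a product of the distance-one generators listed in the lemma.

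Write $[A]:=\aqminor$; by Lemma~\ref{lemma-normal} this is a regular normal element of $\oqgmn$, so the Ore localisation at the powers of $[A]$ exists and $\dhom(\oqgmn,[A])$ is its degree-zero component. Since $\oqgmn$ is generated by its maximal quantum minors, and since normality of $[A]$ lets us insert an $[A]^{-1}$ between any two consecutive $[I_j]$ at the cost of a $q$-scalar, every degree-zero element can be written as a $q$-scalar times a product of factors $[I][A]^{-1}$. It therefore suffices to show that each such $[I][A]^{-1}$ lies in the subalgebra $\mathcal{B}$ generated by the elements named in the lemma. Regarding $I$ and $A$ as $m$-element subsets of $\{1,\dots,n\}$, define the distance $d(I):=|I\setminus A|$; I will prove $[I][A]^{-1}\in\mathcal{B}$ by induction on $d(I)$. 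If $d(I)=0$ then $I=A$ and $[I][A]^{-1}=1$. If $d(I)=1$ then $I=(A\setminus\{\widetilde{a+m-i}\})\cup\{j\}$ for some $i\in\{1,\dots,m\}$ and $j\notin A$, and up to a power of $-q$ from reordering the entries of the minor, $[I][A]^{-1}$ equals $\curlyminor{j,a,\dots,\widehat{(a+m-i)},\dots,a+m-1}\in\mathcal{B}$; so the named generators are precisely the distance-one elements of this form.

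For the inductive step, assume $d(I)=r\geq 2$ and choose any $b_1\in I\setminus A$. Apply Theorem~\ref{theorem-gqpr} with $J_1:=I\setminus\{b_1\}$, $J_2:=\emptyset$ and $K:=A\cup\{b_1\}$; the hypotheses $|J_1|=m-1$, $|J_2|=0$ and $|K|=m+1>m$ are satisfied. Splits $K=K'\sqcup K''$ with $|K'|=1$ correspond to single elements $k\in K$; those with $k\in J_1$ give a minor with a repeated index and so vanish, leaving $k\in\{b_1\}\cup(A\setminus I)$. The $k=b_1$ term is $\qdot\,[I][A]$, while for each $k\in A\setminus I$ the term is $\qdot\,[(I\setminus\{b_1\})\cup\{k\}]\,[(A\cup\{b_1\})\setminus\{k\}]$, whose first factor has distance $r-1$ from $A$ and whose second factor has distance $1$. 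Solving for $[I][A]$, right-multiplying by $[A]^{-2}$, and using the normality of $[A]$ to slot one $[A]^{-1}$ between the two brackets in each term gives
\[
[I][A]^{-1}=\sum_{k\in A\setminus I}\qdot\,\bigl([(I\setminus\{b_1\})\cup\{k\}][A]^{-1}\bigr)\bigl([(A\cup\{b_1\})\setminus\{k\}][A]^{-1}\bigr).
\]
In each summand the first bracket lies in $\mathcal{B}$ by the inductive hypothesis and the second is a named generator, so $[I][A]^{-1}\in\mathcal{B}$, closing the induction.

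The main obstacle is engineering the Pl\"ucker relation so that exactly one summand involves $[I][A]$ itself while every other summand has both minors strictly closer to $A$: the asymmetric choice $J_2=\emptyset$ with $K=A\cup\{b_1\}$ does this by peeling off a single bad index $b_1\in I\setminus A$ at a time. Once that shape is in place the rest is bookkeeping---checking that the coefficient of $[I][A]$ is a nonzero power of $-q$ so it can be divided out, and tracking the $q$-scalars produced when $[A]^{\pm1}$ is commuted past other minors via Lemma~\ref{lemma-normal}.
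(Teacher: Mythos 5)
Your proof is correct and follows essentially the same route as the paper's: induction on $|I\setminus A|$, with the inductive step given by a generalised quantum Pl\"ucker relation whose singleton splittings isolate one index of $I\setminus A$, followed by right-multiplication by $[A]^{-2}$ and normality of $[A]$. The only (immaterial) difference is that you take $J_1=I\setminus\{b_1\}$, $J_2=\emptyset$ where the paper takes $J_1=\emptyset$, $J_2=I\setminus\{c\}$, so your factors appear in the opposite order and the distinguished term is $[I][A]$ rather than $[A][I]$.
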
 


\begin{proof}
Let $A$ be the subalgebra of $\dhom(\oqgmn, \aqminor)$ generated by 
the elements 
$\curlyminor{j,a,a+1,\dots,\widehat{(a+m-i)},\dots,a+m-1}$.

Let $I =\{i_1, i_2,\dots, i_m \}$, with each $i_l\in \{1,\dots,n\}$,  
be an index set such that $I\neq\areduced$.
Suppose that $|I\cap\areduced| = m-t$ for some $1\leq t\leq m$. Certainly,
$\dhom(\oqgmn, \aqminor)$ is generated by 
such $\curlyminor{I}$; so it is enough to
show that each $\curlyminor{I}$ is in $A$. This is done by induction on $t$.

First, consider the case where $t=1$. Then $|I\cap\areduced| = m-1$. Hence,
\[
\curlyminor{I}
= \curlyminor{j,a,a+1,\dots,\widehat{(a+m-i)},\dots,a+m-1}
\]
for some 
$j\in\{1,\dots,n\}\backslash\areduced$
and $1\leq i\leq m$; so that $\curlyminor{I}\in A$, by definition. 

Next, consider $t>1$ and suppose that the result is true for $t-1$. Let $I$ be
an index set with $|I\cap\areduced| = m-t$. 
Choose $c\in I\,\backslash\areduced$. 
We will use the generalised quantum Pl\"ucker relations
of Theorem~\ref{theorem-gqpr} to rewrite the product 
$\aqminor [i_1,i_2,\dots,i_m]$. 

In the notation of Theorem~\ref{theorem-gqpr}, let
$K=\{c\}\sqcup\areduced, J_1=\emptyset$ and 
$J_2=I\backslash\{c\}$. Then, 
\begin{equation*}
\sum_{K' \sqcup K'' = K }\,\qdot [K' ] [K'' \sqcup J_2 ] = 0,
\end{equation*}
where either 
\[
K'= \areduced\quad{\rm and}\quad K''=\{c\}, 
\]
in which case $[K' ] [K'' \sqcup J_2 ]= \aqminor [i_1,i_2,\dots,i_m]$, 
or 
\[
K'=\{c\}\sqcup
\{a,a+1,\dots,\widehat{(a+m-i)},\dots,a+m-1\}\quad{\rm and}
\quad 
K''= \{a+m-i\}
\]
for some $1\leq i\leq m$. Note that in this case, $\widetilde{(a+m-i)}\not\in
I$. Set $S=\{i\mid \widetilde{(a+m-i)}\not\in I\}$. Then, by re-arranging the
above equation, we obtain
\begin{eqnarray*}
\lefteqn{\aqminor [i_1,i_2,\dots,i_m] =}\\ 
&&- \sum_{i\in S}\,\qdot
[c,\,\widetilde{a}, \dots,\widehat{a+m-i},\dots \widetilde{a+m-1}]
[\widetilde{a+m-i},\,i_1,\dots,\widehat{c},\dots,i_m]
\end{eqnarray*}
Multiplying through this equation by $\aqminor^{-2}$ from the right, and using
Lemma~\ref{lemma-normal} gives
\begin{eqnarray*}
\lefteqn{\curlyminor{i_1,i_2,\dots,i_m} = }\\
&&\sum_{i\in S}\,\pm\qdot
\curlyminor{c,\,\widetilde{a}, \dots,\widehat{a+m-i},\dots \widetilde{a+m-1}}
\curlyminor{\widetilde{a+m-i},\,i_1,\dots,\widehat{c},\dots,i_m}
\end{eqnarray*}
Consider the terms on the right hand side of this equation. The first factor
of each term is in $A$ by definition. For the second factor, note that 
\[
|\{\widetilde{a+m-i},\,i_1,\dots,\widehat{c},\dots,i_m\}\cap 
\{\widetilde{a},\widetilde{a+1},\dots \widetilde{a+m-1}\}| 
= m-t+1 = m-(t-1); 
\]
and so 
$\curlyminor{\widetilde{a+m-i},\,i_1,\dots,\widehat{c},\dots,i_m}\in A$, 
by the inductive hypothesis. 
\qed
\end{proof}


\begin{theorem} \label{theorem-dhomiso}
There is an isomorphism 
\[
\rho:\oq(M_{m,n-m}) \goesto \dhom(\oqgmn, \aqminor)
\]
 which is defined on generators by 
 \[
 \rho(x_{ij}) = 
\curlyminor{\widetilde{(j+a+m-1)},\; \widetilde{a}, 
\ldots, \widehat{a+m-i},\ldots, \widetilde{a+m-1}},
\] 
for $1\leq i \leq m$ and
$1 \leq j \leq n-m$. 
\end{theorem}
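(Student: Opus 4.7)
The plan is to follow the same three-step pattern used in \cite[Theorem 4.1]{klr} for the special case $a = n-m+1$, the only additional difficulty being the bookkeeping when indices in the minor $\aqminor$ wrap around modulo $n$. First, I would show that the proposed images $\rho(x_{ij})$ satisfy the four families of defining relations of $\oq(M_{m,n-m})$, so that $\rho$ is a well-defined $k$-algebra homomorphism. Second, I would deduce surjectivity from Lemma~\ref{lemma-generators}. Third, I would derive injectivity from a comparison of Gelfand--Kirillov dimensions.

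For the well-definedness step, I would fix two generators $\rho(x_{ij})$ and $\rho(x_{kl})$, clear the $\aqminor^{-1}$ factors using the normality of $\aqminor$ from Lemma~\ref{lemma-normal} (picking up explicit powers of $q$ that have to be tracked), and thereby rephrase each desired quantum matrix relation as a relation between products of two maximal quantum minors in $\oqgmn$. Quantum Muir's law (Proposition~\ref{proposition-qmuir}) then allows the common indices of the two minors to be deleted, reducing each relation to one involving at most four distinct indices, that is, inside a copy of $\oqgtwofour$. The standard relations listed at the end of Section~\ref{section-basicdefs} handle each of the three quasi-commuting cases (equal row, equal column, and NE--SW disjoint); the fourth family, corresponding to pairs with $i<k$ and $j<l$, produces the correction term $\left(q-q^{-1}\right)\qs{14}\qs{23}$, which after rewriting becomes precisely the desired $(q-q^{-1})\rho(x_{il})\rho(x_{kj})$. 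I expect this to be the main obstacle, mostly because of the need to track the correct power of $-q$ in each commutation when one or more of the indices $\widetilde{j+a+m-1}$ lies outside the interval $\{a,\dots,a+m-1\}$ and must be reduced modulo $n$, possibly changing its position in the sorted tuple and introducing a sign.

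Surjectivity is then immediate: the map $j\mapsto \widetilde{j+a+m-1}$ is a bijection between $\{1,\dots,n-m\}$ and $\{1,\dots,n\}\setminus \areduced$, so the elements $\rho(x_{ij})$ exhaust the generating set identified in Lemma~\ref{lemma-generators}. For injectivity, I would invoke the fact that $\oqgmn$ is a noetherian domain of Gelfand--Kirillov dimension $m(n-m)+1$; Ore-localising at powers of the normal element $\aqminor$ preserves both the domain and noetherian properties, and passing to the degree-zero subalgebra of the resulting $\mz$-graded localisation drops the Gelfand--Kirillov dimension by one. Since $\oq(M_{m,n-m})$ is itself a noetherian domain of Gelfand--Kirillov dimension exactly $m(n-m)$ (as an iterated Ore extension of $k$), and proper quotients of such a domain have strictly smaller Gelfand--Kirillov dimension, the surjection $\rho$ cannot have a non-trivial kernel. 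Therefore $\rho$ is an isomorphism.
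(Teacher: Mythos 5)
Your proposal follows essentially the same route as the paper's proof: verify the quantum matrix relations by using normality of $\aqminor$ and Quantum Muir's Law to reduce to four-index computations in $\oqgtwofour$ (with the cyclic reduction of indices accounting for the several subcases), obtain surjectivity from Lemma~\ref{lemma-generators}, and obtain injectivity from the Gelfand--Kirillov dimension argument of \cite[Theorem 4.1]{klr}, which you usefully spell out. The approach is correct.
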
 


\begin{proof}
In order to show that $\rho$ defines a homomorphism, we have to show that the
images of the $x_{ij}$ under $\rho$ obey the relevant commutation relations.
As indicated at the start of Section~\ref{section-basicdefs}, there are four
types of relations to consider. 
Set 
\[
I:=\{ \widetilde{a}, \widetilde{a+1},
\ldots,\ldots, \widetilde{a+m-1}\}
\backslash\{\widetilde{a+m-k},\widetilde{a+m-i}\}.
\] Then
\begin{eqnarray*}
\rho(x_{ij})&=& 
 [\widetilde{(j+a+m-1)},\; \widetilde{a}, 
\ldots, \widehat{a+m-i},\ldots, \widetilde{a+m-1}]\aqminor^{-1}\\
&=& [\widetilde{(j+a+m-1)},\;\widetilde{a+m-k},\,I]
[\widehat{a+m-i}, \widetilde{a+m-k}, I]^{-1}
\end{eqnarray*} 
and 
\begin{eqnarray*}
\rho(x_{kl})&=&
[\widetilde{(l+a+m-1)},\; \widetilde{a}, 
\ldots, \widehat{a+m-k},\ldots, \widetilde{a+m-1}]\aqminor^{-1}\\
&=& [\widetilde{(l+a+m-1)},\; \widetilde{a+m-i},\,I]
[\widehat{a+m-i}, \widetilde{a+m-k}, I]^{-1}
\end{eqnarray*} 
In order to calculate commutation relations  between these two elements, 
we may ignore the occurences of $I$, by using the Quantum Muir's Law, 
Proposition~\ref{proposition-qmuir}. 
This reduces the problem to computations that only involve the (at most) 
four columns
\begin{eqnarray}\label{fourcolumns}
\widetilde{(j+a+m-1)}, \quad\widetilde{(l+a+m-1)},\quad
\widetilde{a+m-i}\quad {\rm and}\quad \widetilde{a+m-k}.
\end{eqnarray}
As only the order of the columns is relevant, 
all the necessary computations can be done by using
the known relations in $\oqgtwofour$. 
For each commutation relation, there are several
subcases involving the relative positions of the column 
indices~(\ref{fourcolumns}).
Here, we present just two calculations, since the computations
are similar in all cases. 
The final type of relation is the most involved, 
so we will concentrate on
that one. So, suppose that $i<k$ and $j<l$. We must show that
\[
\rho(x_{ij})\rho(x_{kl})-\rho(x_{kl})\rho(x_{ij})
=(q-q^{-1})\rho(x_{il})\rho(x_{kj}).
\]
(Note that $\rho(x_{il}) = [\widetilde{(l+a+m-1)},\;\widetilde{a+m-k},\,I]
[\widehat{a+m-i}, \widetilde{a+m-k}, I]^{-1}$ and that 
$\rho(x_{kj})=[\widetilde{(j+a+m-1)},\; \widetilde{a+m-i},\,I]
[\widehat{a+m-i}, \widetilde{a+m-k}, I]^{-1}$.)
The restrictions $1\leq i<k\leq m$ and $1\leq j<l\leq n-m$ ensure that
\[
a+m-k<a+m-i<j+a+m-1<l+a+m-1\]
Thus, one of the following four cases must hold:
\begin{eqnarray}
\widetilde{a+m-k}<\widetilde{a+m-i}< 
\widetilde{(j+a+m-1)}<\widetilde{(l+a+m-1)}\label{first}\\
\widetilde{a+m-i}< 
\widetilde{(j+a+m-1)}<\widetilde{(l+a+m-1)}<\widetilde{a+m-k}\label{second}\\
\widetilde{(j+a+m-1)}<\widetilde{(l+a+m-1)}<\widetilde{a+m-k}
<\widetilde{a+m-i}\label{third}\\
\widetilde{(l+a+m-1)}<\widetilde{a+m-k}
<\widetilde{a+m-i}<\widetilde{(j+a+m-1)}\label{fourth}
\end{eqnarray}

We can check the commutation relations from the commutation relations for
$\oqgtwofour$, since only the ordering of the column indices affects the
relation. Thus, for example, we see that in Case~(\ref{first}) we need to
check that 
$[13][12]^{-1}[24][12]^{-1} 
-[24][12]^{-1}[13][12]^{-1}
= (q-q^{-1})[14][12]^{-1}[23][12]^{-1}$, 
and we now do this:
\begin{eqnarray*}
[13][12]^{-1}[24][12]^{-1} 
-[24][12]^{-1}[13][12]^{-1}
&=& q^{-1}[13][24][12]^{-2} -q^{-1}[24][13][12]^{-2}\\
= q^{-1}([13][24] - [24][13])[12]^{-2}
&=& q^{-1}(q-q^{-1})[14][23][12]^{-2}\\
&=& (q-q^{-1})[14][12]^{-1}[23][12]^{-1},
\end{eqnarray*} 
as required. 

Next, consider Case~(\ref{second}) above. Here we need to check 
the following equality: 
\begin{eqnarray*}
\lefteqn{[24][14]^{-1}[13][14]^{-1} - [13][14]^{-1}[24][14]^{-1} ~~~~~~~~~}\\
&=&
q[24][13][14]^{-2}-q^{-1}[13][24][14]^{-2}\\
&=& (q[24][13] - q^{-1}[13][24])[14]^{-2}\\
&=&
(q[24][13] - q^{-1}[24][13] - q^{-1}(q-q^{-1})[14][23])[14]^{-2}\\
&=&
(q-q^{-1})([24][13]- q^{-1}[14][23])[14]^{-2}\\
&=&
(q-q^{-1})q[34][12][14]^{-2}\\
&=&
(q-q^{-1})[34][14]^{-1}[12][14]^{-1}.
\end{eqnarray*} 
(Note that the fifth equality is obtained by 
using a version of the quantum Pl\"ucker relation.)

The remaining cases to be considered in the verification of the final quantum
matrix relation are similar to, but easier than, the above two cases; 
so we omit the rest of the calculations. 

Thus, $\rho$ extends to a homomorphism. The images of the generators under
$\rho$ generate $\dhom(\oqgmn, \aqminor)$, by Lemma~\ref{lemma-generators}; so
$\rho$ is an epimorphism. 
In order to see that $\rho$ is a monomorphism, we use
Gelfand-Kirillov dimension. The argument is exactly the same 
as that given at the end
of \cite[Theorem 4.1]{klr}.
\qed\end{proof}


\section{The cyclic order $<_s$}
\label{section-cyclicorders}

The set $\Pi = \Pi_{m,n}$ of index sets 
(equivalently, of generating quantum minors of $\oqgmn$) 
carries a natural partial order
defined in the following way. 
Let $I=\{i_1< \dots
<i_m\}$ and $J=\{j_1< \dots <j_m\}$ be  two index sets, then 
\[ 
I\le_\st J
\qquad\Longleftrightarrow\qquad 
i_k \le j_k \quad\mbox{for}\quad 1 \le k \le m. 
\]

In order to study properties of the quantum grassmannian, the notion of a
quantum graded algebra with a straightening law (on a partially ordered set
$\Pi$) was introduced in \cite{lr2}. We now recall the definition of these 
algebras and mention various properties that we will
use later.\\

Let $A$ be an algebra and $\Pi$ a finite subset of 
elements of $A$ with a partial order $<_\st$. A  
{\em standard monomial} on $\Pi$ is an element
of $A$ which is either $1$ or of the form $\alpha_1\cdots\alpha_s$, 
for some $s\geq 1$, where $\alpha_1,\dots,\alpha_s \in \Pi$ and
$\alpha_1\le_\st\dots\le_\st\alpha_s$. 

\begin{definition} \label{recall-q-gr-asl}
Let $A$ be an ${\mathbb N}$-graded $\k$-algebra and $\Pi$ a finite subset 
of $A$ equipped with a partial order $<_\st$. 
We say that $A$ is a {\em quantum graded algebra with a straightening law} 
on the poset $(\Pi,<_\st)$ 
if the following conditions are satisfied.\\
(1) The elements of $\Pi$ are homogeneous with positive degree.\\
(2) The elements of $\Pi$ generate $A$ as a $\k$-algebra.\\
(3) The set of standard monomials on $\Pi$ is a linearly independent set.\\
(4) If $\alpha,\beta\in\Pi$ are not comparable for $<_\st$, 
then $\alpha\beta$ 
is a linear combination of terms $\lambda$ or $\lambda\mu$, where 
$\lambda,\mu\in\Pi$, $\lambda\le_\st\mu$ and $\lambda<_\st\alpha,\beta$.\\
(5) For all $\alpha,\beta\in\Pi$, there exists $c_{\alpha\beta} \in \k^\ast$ 
such that $\alpha\beta-c_{\alpha\beta}\beta\alpha$ is a linear combination of 
terms $\lambda$ or $\lambda\mu$, where $\lambda,\mu\in\Pi$,
$\lambda\le_\st\mu$ and $\lambda<_\st\alpha,\beta$.\\
\end{definition}

By \cite[Proposition 1.1.4]{lr2}, if $A$ is a quantum graded 
algebra with a straightening law on the
partially 
ordered set $(\Pi,<_\st)$, then the set of standard monomials on $\Pi$ forms a
$\k$-basis of $A$. Hence, in the presence of a standard monomial basis, the
structure of a quantum graded algebra with a straightening law 
may be seen as providing more detailed 
information on the way standard monomials multiply and commute.

It is shown, in \cite[Theorem 3.4.4]{lr2}, that $\oqgmn$
is a quantum graded algebra with a straightening law on $(\Pi_{m,n},\le_\st)$.

The aim in this section is to show that there are other partial orderings that
can be put on $\Pi$ in such a way that $\oqgmn$ has the structure of a 
quantum graded algebra with a straightening law. 

Consider the order $\les$ defined by 
$s\les s+1\les\dots\les n\les 1\les\dots\les s-1$.

We use this ordering of the set $\{1,\dots, n\}$ of column indices of $\oqmmn$
to induce a partial ordering $<_s$ on $\Pi =\Pi_{m,n}$:
let $I=\{i_1<_s \dots
<_s i_m\}$ and $J=\{j_1<_s  \dots <_s j_m\}$ be  two index sets, then 
\[ 
I\le_s J \qquad\Longleftrightarrow\qquad  
i_k \le_s j_k \quad\mbox{for~each}\quad
k\in\{1,\dots,m\}. 
\]

When we are considering $\Pi$ with this induced partial ordering, 
we will use the notation 
$\Pi_s$. 

For example, Figure~\ref{figure-2ordering} shows the poset $\Pi_2$ 
in $\oqgtwofour$.

\begin{figure}[ht]
\ignore{
$$\xymatrixrowsep{2.4pc}\xymatrixcolsep{3.2pc}\def\objectstyle{\scriptstyle}
\xymatrix@!0{
 & [14] \edge[d]&\\
 &[13] \edge[dl] \edge[dr]\\
[12]\edge[dr]&&[34]\edge[dl]\\
&[24]\edge[d]&\\
&[23]&
}$$
\caption{The poset $\Pi_2$ on $\oqgtwofour$.}
\label{figure-2ordering}
}
\end{figure}

The aim in this section is to show that $\oqgmn$ is a 
graded quantum algebra
with a straightening law with respect to the poset $\Pi_s$. \\

Set $M=\areduced$ for some $1\leq a\leq n$. 
In the previous section, we have seen that the
dehomogenisation of $\oqgmn$ at $[M]$ is isomorphic to $\oq(M_{m,n-m})$. 
We will show
that the usual standard partial order on the quantum minors of
$\oq(M_{m,n-m})$ is order isomorphic to the partial order  $\Pi_s$ on 
$\oqgmn$ when $a=s-m$. Once this is established,
we use the fact that $\oq(M_{m,n-m})$ is a graded quantum algebra with a
straightening law to obtain the desired result. 

In order to do this, we need to know how the quantum minors of 
$\oq(M_{m,n-m})$ behave under the dehomogenisation isomorphism $\rho$ of 
Theorem~\ref{theorem-dhomiso}.

Note that 
\[
\rho(x_{ij}) = 
\curlyminor{(j+a+m-1),\; a, 
\ldots, \widehat{a+m-i},\ldots, a+m-1}, 
\] 
for $1\leq i \leq m$ and
$1 \leq j \leq n-m$.

Consider the quantum minor $[I|J]$ of $\oq(M_{m,n-m})$. Suppose that $I=\{i_1,
\dots, i_t\}$ and $J=\{j_1, \dots, j_t\}$, for some $1\leq t\leq m$, with
$i_k\in\{1,\dots,m\}$ and $j_k\in\{1,\dots,n-m\}$. Define the maximal quantum
minor $[Q(I,J)]\in\Pi_s$ to be the 
quantum minor with index set $Q(I,J)$ defined by 
\begin{eqnarray*}
\lefteqn{Q(I,J):=
\{\widetilde{(j_1 + a+m-1)}, \widetilde{(j_2 + a+m-1)}
\dots,\widetilde{(j_t + a+m-1)}\}}\\
&&\bigsqcup 
\left(\{\widetilde{a}, \widetilde{a+1}, \dots, \widetilde{a+m-1}\}
\backslash
\{\widetilde{a+m-i_1}, \widetilde{a+m-i_2},\dots,\widetilde{a+m-i_t}\}\right)
\end{eqnarray*}
In the special case where $I=\{i\}$ and $J=\{j\}$, we will write $Q(i,j)$
for $Q(I,J)$. Thus, 
\[
\rho(x_{ij}) = 
\curlyminor{(j+a+m-1),\; a, 
\ldots, \widehat{a+m-i},\ldots, a+m-1} = [Q(i,j)][M]^{-1}. 
\] 
Finally, define 
\[
\curlyminor{Q(I,J)}:= [Q(I,J)][M]^{-1}.
\]
The aim is to show that $\rho([I|J]) = \curlyminor{Q(I,J)}$. 
The main calculation is performed in the following preparatory lemma. \\

Set $\sign(i,j) = \ell(j,i)-\ell(i,j)$; so that 
$\sign(i,j)=\left\{\begin{tabular}{ll} 
$1$,& if $i<j$; \\$0$,& if $i=j$;\\ $-1$,& if $i>j$. 
\end{tabular}\right. $


\begin{lemma}\label{lemma-qijm}
Suppose that $I =\{i_1, i_2,\dots,i_t\}$ and $J = \{j_1, j_2,\dots,j_t\}$ with
$t\leq\min\{m,n-m\}$. Let $M=\areduced$, for some $1\leq a\leq n$. Then 
\begin{eqnarray}\label{ifwecan} 
[Q(I,J)][M] + \sum_{k=1}^t\,
(-q)^{(t-k) -\sign(\widetilde{a+m-i_t},\widetilde{j_k+a+m-1})}
[Q(I\backslash\{i_t\},J\backslash\{j_k\})]
[Q(x_{i_tj_k})]
&=&0~~~~~
\end{eqnarray} 
in $\oq(G(m,n))$. 
\end{lemma}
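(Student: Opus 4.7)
\noindent
\emph{Proposal for proof.}
The plan is to apply the Generalised Quantum Pl\"ucker Relations of Theorem~\ref{theorem-gqpr} to a single carefully chosen triple $(J_1,J_2,K)$ and then identify the terms that survive cancellation. Set $J_1=\emptyset$, $J_2=M\setminus\{\widetilde{a+m-i_t}\}$ (an $(m-1)$-element set), and
\[
K \;=\; Q(I,J)\,\sqcup\,\{\widetilde{a+m-i_t}\},
\]
so that $|K|=m+1>m$ and $|J_1|,|J_2|\le m$, as required. Every valid decomposition has $|K'|=m$ and $|K''|=1$, so there are exactly $m+1$ terms, one for each choice of the singleton $K''\subseteq K$.

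The next step is to show that at most $t+1$ of these terms are nonzero. First, note that the ``new column'' indices $\widetilde{j_k+a+m-1}$ ($1\le k\le t$) lie in the complement of $M$, hence are distinct from every element of $J_2$; similarly $\widetilde{a+m-i_t}\notin J_2$ by construction. If on the other hand we take $K''=\{c\}$ with $c\in M\setminus\{\widetilde{a+m-i_l}:1\le l\le t\}$, then $c$ already occurs in $J_2$, so $[K''\sqcup J_2]=0$ and this term drops out. The surviving decompositions are therefore:
\begin{itemize}
\item $K''=\{\widetilde{a+m-i_t}\}$, giving $[K']=[Q(I,J)]$ and $[K''\sqcup J_2]=[M]$;
\item $K''=\{\widetilde{j_k+a+m-1}\}$ for $1\le k\le t$, giving $[K']=[Q(I\setminus\{i_t\},J\setminus\{j_k\})]$ and $[K''\sqcup J_2]=[Q(i_t,j_k)]$.
\end{itemize}
Matching these against the statement of the lemma recovers the correct list of summands.

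Finally, one computes the signs. Since $J_1=\emptyset$, the only contributions are $\ell(K';K'')$ and $\ell(K'';J_2)$. Writing $p(x)$ for the position of an integer $x\in\{1,\dots,n\}$ relative to the sets $K$ and $J_2\cup\{\widetilde{a+m-i_t}\}=M$, one finds that $\ell(K'';J_2)$ depends only on the position of the chosen singleton inside $M$ (which is fixed in the first case and is always $0$ in the second, since $\widetilde{j_k+a+m-1}$ lies outside $M$ entirely), while $\ell(K';K'')$ reflects the position of $K''$ inside the totally ordered set $K$. Normalising so that the $[Q(I,J)][M]$ term has coefficient $1$ (by dividing the Pl\"ucker relation through by the common $(-q)$-power occurring in that term), the residual exponent on the $k$-th summand reduces to $(t-k)$ coming from the $t-k$ indices $\widetilde{j_{k+1}+a+m-1},\dots,\widetilde{j_t+a+m-1}$ that sit to one side of $\widetilde{j_k+a+m-1}$ in $K$, together with a correction of $-\sign(\widetilde{a+m-i_t},\widetilde{j_k+a+m-1})$ recording the relative order of the two indices that are swapped between $K'$ and $K''\sqcup J_2$ when one passes from the first distinguished decomposition to the $k$-th one. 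This is precisely the exponent appearing in~(\ref{ifwecan}).

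The main obstacle will be in that last step: because the quantum grassmannian indices are read cyclically via the tilde reduction, the integers $\widetilde{a+m-i_t}$ and $\widetilde{j_k+a+m-1}$ can wrap around $n$, so the naive inversion counts in $\ell(K';K'')$ and $\ell(K'';J_2)$ do not always match the unreduced inequalities. One therefore has to split into the cases determined by whether wrap-around occurs and verify in each case that the two contributions combine to give the claimed $(t-k)-\sign(\widetilde{a+m-i_t},\widetilde{j_k+a+m-1})$; the key observation that makes this uniform is that the indices $\widetilde{j+a+m-1}$ for $1\le j\le n-m$ form a block of $n-m$ consecutive residues disjoint from $M$, so their relative cyclic ordering inside $K$ matches their ordering by $j$.
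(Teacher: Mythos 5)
Your choice of data for Theorem~\ref{theorem-gqpr} is a genuinely different, and more direct, route than the paper's. With $J_1=\emptyset$, $J_2=M\setminus\{\widetilde{a+m-i_t}\}$ and $K=Q(I,J)\sqcup\{\widetilde{a+m-i_t}\}$ the cardinalities match ($|K|=m+1=2m-|J_1|-|J_2|$), the terms in which $K''$ is a singleton drawn from $M\setminus\{\widetilde{a+m-i_1},\dots,\widetilde{a+m-i_t}\}$ do vanish because the repeated index kills $[K''\sqcup J_2]$, and the $t+1$ surviving products are exactly $[Q(I,J)][M]$ and the $[Q(I\setminus\{i_t\},J\setminus\{j_k\})][Q(i_t,j_k)]$; I checked this identification, and the resulting identity, in several instances including ones where the reduced indices wrap around $n$. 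The paper, by contrast, never applies the Pl\"ucker relation in this generality: it reduces by induction to the case $t=n-m=m$ (deleting an unused column when $t<n-m$, and deleting a common row/column pair and re-inserting it via Quantum Muir, Proposition~\ref{proposition-qmuir}, when $t=n-m<m$), and only in that explicit situation, where $I=J=\{1,\dots,m\}$, does it compute the exponents $\ell(K';K'')+\ell(K'';J_2)$, splitting into the subcases $1\le a\le m$ and $m+1\le a\le 2m$ to control wrap-around. Your route, if completed, gives a shorter, induction-free proof; the paper's reductions exist precisely to avoid the general exponent bookkeeping you have deferred.

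That bookkeeping is where your proposal has a genuine gap, and your sketch of it contains a false step. The function $\ell$ in Theorem~\ref{theorem-gqpr} counts inversions with respect to the standard order on $\{1,\dots,n\}$, not the cyclic order, so for $K''=\{\widetilde{j_k+a+m-1}\}$ one has $\ell(K'';J_2)=|\{y\in J_2: y<\widetilde{j_k+a+m-1}\}|$, which is typically nonzero (for $n=4$, $a=1$, so $M=\{1,2\}$ and $J_2=\{2\}$, one gets $\ell(\{3\};\{2\})=1$); it is not ``always $0$'' merely because the singleton lies outside $M$. Similarly $|\{l\ne k:\ \widetilde{j_l+a+m-1}>\widetilde{j_k+a+m-1}\}|$ need not equal $t-k$ once the block of residues $\widetilde{a+m},\dots,\widetilde{a+n-1}$ wraps past $n$. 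What actually has to be proved, writing $c_k=\widetilde{j_k+a+m-1}$ and $d=\widetilde{a+m-i_t}$, is that the difference of exponents
$\bigl(\ell(K\setminus\{c_k\};\{c_k\})+\ell(\{c_k\};J_2)\bigr)-\bigl(\ell(K\setminus\{d\};\{d\})+\ell(\{d\};J_2)\bigr)$
equals $(t-k)-\sign(d,c_k)$: the individual summands shift under wrap-around but their difference is stable, and this is the identity that your promised case analysis must establish. Until that verification is written out (or you fall back on the paper's reduction to the explicit case $t=n-m=m$), the proof is incomplete.
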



\begin{proof}
\noindent{\bf Special case:}~~ 
We start by considering the special case where $t=m$ and $n=2m$. 
In this case, $I=J=\{1,\dots, m\}$. 
Thus, $Q(I,J) = \{\widetilde{a+m},\dots,\widetilde{a+2m-1}\}$ and 
$M=\{\widetilde{a},\dots,\widetilde{a+m-1}\}$. \\


\noindent{\bf Special case, subcase 1:}
~~First, consider the case where
$m+1\leq a\leq 2m$, and write $a=m+1+b$, with $0\leq b\leq m-1$. 
Note that 
$\widetilde{k+a+m-1} = b+k$ and 
$\sign(a,\widetilde{k+a+m-1}) = \sign(a,k+b) =-1$,
because $k+b<a$. Also, (\ref{ifwecan}), which is what 
we need to prove, becomes
\begin{eqnarray}\label{special1ifwecan} 
[Q(I,J)][M] +\sum_{k=1}^m\,
(-q)^{m+1-k}
[Q(I\backslash\{m\},J\backslash\{k\})]
\times 
[k+b,\; a+1, \dots, 2m,1,\dots,b] =0 
\end{eqnarray}
The proof uses Theorem~\ref{theorem-gqpr} with $J_1 =\emptyset$.
Thus,  
\begin{equation}\label{j1empty}
\sum_{K' \sqcup K'' = K }^{} (-q)^{\ell(K';K'') 
+\ell(K'';J_2) } [K'] [K'' \sqcup J_2 ] = 0,
\end{equation}
and we set $
K=\{b+1, \dots, b+m\}\sqcup \{a\}$ and 
$J_2=\{1 \dots, b,\,a+1, \dots, 2m\}$. 

There are $m+1$ terms in this sum, corresponding to the choices $K''=\{a\}$
and $K''=\{b+k\}$ for $1\leq k\leq m$. 

When 
$K''=\{a\}$ and  $K' = \{b+1, \dots, b+m\}$ we have 
\begin{eqnarray*}
\ell(K';K'') 
+\ell(K'';J_2) &=&\ell(\{b+1, \dots, b+m\};\{a\}) 
+ \ell(\{a\};\{a+1, \dots, 2m, 1,\, \dots, b\})\\
&=& 0+b=b
\end{eqnarray*}
and so the corresponding term in the sum is 
$(-q)^b[Q(I,J)][M]$.

When $K'' = \{b+k\}$ and $K' = \{b+1, \dots, b+m\}\backslash\{b+k\}\sqcup
\{a\}$ we have 
\begin{eqnarray*}
\ell(K';K'') 
+\ell(K'';J_2) &=&\ell(\{b+1, \dots, b+m\}\backslash\{b+k\}\sqcup
\{a\};\{b+k\})\\&&+ \ell(\{b+k\};\{a+1, \dots, 2m,1, \, \dots, b\})\\
&=& (m+1-k)+b
\end{eqnarray*}
and so the corresponding term in the sum is
$(-q)^{m+1-k+b}Q(I\backslash\{m\},J\backslash\{k\})Q(m,k)$. 

Thus, 
\[ 
(-q)^b[Q(I,J)][M] + \sum_{k=1}^m\,
(-q)^{m+1-k+b}Q(I\backslash\{m\},J\backslash\{k\})Q(m,k)=0. 
\] 
Cancelling $(-q)^b$ gives
(\ref{special1ifwecan}), the equality we need to finish this case. 
 \\
     


\noindent{\bf Special case, subcase 2:}
~~Now, consider the case where $1\leq
a\leq m$. Note that $k+a+m-1\leq 2m$ when $k\leq m-a+1$ while $k+a+m-1>2m$
when $k>a+m-1$. Thus, 
$\widetilde{k+a+m-1} = k+a+m-1$ for $k\leq m-a+1$ and 
$\widetilde{k+a+m-1} = k+a-m-1$ for $k>a+m-1$. 
Set $\widebar{k} = \widetilde{k+a+m-1}$ in each of these cases.

Now, 
$\sign(a,\widetilde{k+a+m-1}) = \sign(a,\widebar{k}) =1$
when $k\leq m-a+1$ and, similarly, 
$\sign(a,\widetilde{k+a+m-1}) = -1$ when $k>m-a+1$

Thus, in this case, 
(\ref{ifwecan}), which is what 
we need to prove, becomes
\begin{eqnarray}\label{special2ifwecan} 
\lefteqn{[Q(I,J)][M] \quad +}\notag\\
&&\sum_{k=1}^{m-a+1}\,
(-q)^{m-1-k}
[Q(I\backslash\{m\},J\backslash\{k\})]
[k+a+m-1,\; a+1, \dots, a+m-1]\notag\\
&&-\;
(\sum_{k>m-a+1}^{m}\,
(-q)^{m+1-k}
[Q(I\backslash\{m\},J\backslash\{k\})]
[k+a-m-1,\; a+1, \dots, a+m-1])\notag\\ &&=0 
\end{eqnarray}

The proof again uses Theorem~\ref{theorem-gqpr} with $J_1 =\emptyset$, 
but  $K=\{1,\dots,a-1,a+m,\dots,2m\}\sqcup\{a\}$ and $J_2
=\{a+1,\dots, a+m-1\}$. When $K''=\{a\}$ and $K'=
\{1,\dots,a-1,a+m,\dots,2m\}$ we have
\begin{eqnarray*}
\ell(K';K'') 
+\ell(K'';J_2) &=&\ell(\{1,\dots,a-1,a+m,\dots,2m\};\{a\}) \\
&&+\; \ell(\{a\};\{a+1, \dots, a+m-1\})\\
&=& (m+1-a) + 0 = m+1-a
\end{eqnarray*}
and so the corresponding term in the sum is 
$(-q)^{m+1-a}[Q(I,J)][M]$.

Consider the case that $1\leq k\leq m-a+1$. In this case, $\widebar{k}
=k+a+m-1$ and $a+m\leq \widebar{k}\leq 2m$. When $K'' = \widebar{k}$ and
$K'=\{1,\dots,a-1,a+m,\dots,2m\}\backslash\{\widebar{k}\}\sqcup\{a\}$ we have
\begin{eqnarray*}
\ell(K';K'') 
+\ell(K'';J_2) 
        &=&
\ell(\{1,\dots,a-1,a+m,\dots,2m\}
\backslash\{\widebar{k}\}\sqcup\{a\};\{k+a+m-1\})\\
       && +\;
  \ell(\{k+a+m-1\}; \{a+1,\dots, a+m-1\}) \\
        &=&
2m-(k+m+a-1) + m-1 = 2m-a-k
\end{eqnarray*}
and so the corresponding term in the sum is
\[
(-q)^{2m-a-k}[Q(I\backslash\{m\},J\backslash\{k\})]
[k+a+m-1,\; a+1, \dots, a+m-1]).
\]

Next, consider the case where $m-a+1<k\leq m$. In this case, 
$\widebar{k} = k+a-m-1$ and $1\leq \widebar{k}\leq a-1$. When 
$K'' = \widebar{k}$ and
$K'=\{1,\dots,a-1,a+m,\dots,2m\}\backslash\{\widebar{k}\}\sqcup\{a\}$ 
we have
\begin{eqnarray*}
\ell(K';K'') 
+\ell(K'';J_2) 
        &=&
\ell(\{1,\dots,a-1,a+m,\dots,2m\}
\backslash\{\widebar{k}\}\sqcup\{a\};\{k+a-m-1\}) \\ 
       && +\;
\ell(\{k+a-m-1\}; \{a+1,\dots, a+m-1\})\\
&=&             
m+1-\widebar{k} +   0=     m+1-(k-a-m-1) = 2(m+1) -k -a; 
\end{eqnarray*}
and so the corresponding term in the sum is
\[
(-q)^{2(m+1)-a-k}[Q(I\backslash\{m\},J\backslash\{k\})]
[k+a+m-1,\; a+1, \dots, a+m-1]).
\] 
Thus, 
\begin{eqnarray*}
(-q)^{m+1-a}[Q(I,J)][M] &+& \sum_{k=1}^{m-a+1}\,
(-q)^{2m-a-k}[Q(I\backslash\{m\},J\backslash\{k\})]
[Q(x_{mk})]\\ &+& 
\sum_{k>m-a+1}^{m}\,(-q)^{2(m+1)-a-k}[Q(I\backslash\{m\},J\backslash\{k\})]
[Q(x_{mk})] =0
\end{eqnarray*}

Cancelling $(-q)^{m+1-a}$ gives
(\ref{special2ifwecan}), the equality we need to prove to finish 
this case. \\


This establishes the special case.\\


\noindent{\bf General case:}~~Now, consider the general case. 
Here, the proof is by induction. The base case
of $\oq(G(1,2))$ is trivial to check. First, suppose that the result holds in
$\oq(G(m',n'))$ for all $m'\leq n'<n$. Next, suppose that the result holds in
all $\oq(G(m',n))$ for all $m'<m$. Finally, suppose that the result holds in
$\oqgmn$ for all values of $t'<t$. \\

Suppose that $t<n-m$. Then $t+m<n$; and so there is an index $c$, say, with
$c\not\in M\sqcup\{j_1,\dots,j_t\}$. Note that the index $c$ does not occur in
any of the terms in (\ref{ifwecan}). Thus, we may ignore the column $c$ and
work in $\oq(G(m,n-1))$ where the result holds by the inductive hypothesis. 
\\

Next, suppose that $t=n-m <m$. Choose  an index
$r\in\{1,\dots,m\}\backslash\{i_1,\dots,i_t\}$. The index $\widetilde{a+m-r}$
occurs in each of the quantum minors in (\ref{ifwecan}). By the inductive
hypothesis, the result (\ref{ifwecan}) holds for the triple
$I':=I\backslash\{\widetilde{a+m-r}\},J':=J\backslash\{\widetilde{a+m-r}\},
M':=M\backslash\{\widetilde{a+m-r}\}$ in the copy of $\oq(G(m-1,n-1))$
that sits inside the copy of $\oq(M_{m-1,n-1})$ obtained by removing the row $r$
and the column $\widetilde{a+m-r}$: call the resulting equation ($1'$). 
We obtain the desired result by invoking the Quantum Muir Law, 
Proposition~\ref{proposition-qmuir}, 
to insert the index
$\widetilde{a+m-r}$ in each quantum minor occuring in ($1'$). 
\\


It only remains to consider the case where $t=n-m=m$. However, this is the
special case that was established in the first part of the proof. 
\qed
\end{proof}


\begin{proposition}
$\rho([I|J]) = \curlyminor{Q(I,J)}$
\end{proposition}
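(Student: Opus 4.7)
My plan is to induct on $t = |I| = |J|$. The base case $t = 1$ is immediate from the definition of $\rho$ on generators, since $\rho(x_{ij}) = \curlyminor{Q(i,j)}$. For the inductive step, I would first expand $[I|J]$ in $\oq(M_{m,n-m})$ using the standard quantum Laplace expansion along its last row, namely
\[
[I|J] = \sum_{k=1}^{t} (-q)^{t-k}\, [I\setminus\{i_t\}\,|\,J\setminus\{j_k\}]\, x_{i_t j_k},
\]
then apply the homomorphism $\rho$, using the inductive hypothesis on the $(t-1)$-minor and the base case on the generator to obtain
\[
\rho([I|J]) = \sum_{k=1}^{t} (-q)^{t-k}\, \curlyminor{Q(I\setminus\{i_t\},J\setminus\{j_k\})}\, \curlyminor{Q(i_t,j_k)}.
\]

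In parallel, Lemma~\ref{lemma-qijm} rewrites $[Q(I,J)][M]$ as a linear combination of the products $[Q(I\setminus\{i_t\},J\setminus\{j_k\})]\,[Q(i_t,j_k)]$. I would multiply on the right by $[M]^{-2}$ to produce an expression for $\curlyminor{Q(I,J)} = [Q(I,J)][M]^{-1}$, and then push one copy of $[M]^{-1}$ past $[Q(i_t,j_k)]$ using the normality of $[M]$ (Lemma~\ref{lemma-normal}). The relevant quasi-commutation constant can be computed via the Quantum Muir Law (Proposition~\ref{proposition-qmuir}): cancelling the $m-1$ indices shared by $M$ and $Q(i_t,j_k)$ reduces everything to the commutation of the two singletons $[\widetilde{a+m-i_t}]$ and $[\widetilde{j_k+a+m-1}]$ in $\oq(G(1,2))$, contributing a factor $q^{\sign(\widetilde{a+m-i_t},\widetilde{j_k+a+m-1})}$. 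Combined with the $-(-q)^{(t-k)-\sign(\widetilde{a+m-i_t},\widetilde{j_k+a+m-1})}$ coefficient coming from the lemma, the $-\sign$ term in the exponent is cancelled and (using that $\sign = \pm 1$, so the extra $-1$ is absorbed) the net coefficient becomes $(-q)^{t-k}$. This matches the Laplace expansion term by term, completing the induction.

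The main obstacle will be the bookkeeping of powers of $q$ (and the overall sign): one has to verify that pushing $[M]^{-1}$ through $[Q(i_t,j_k)]$ really contributes exactly $q^{\sign(\widetilde{a+m-i_t},\widetilde{j_k+a+m-1})}$ and that the $-1$ prefactor from Lemma~\ref{lemma-qijm} is absorbed correctly when combined with the sign cancellation. The indices $\widetilde{a+m-i_t}\in M$ and $\widetilde{j_k+a+m-1}\notin M$ are always distinct, so $\sign$ never vanishes, which is crucial. Beyond that, the argument is essentially mechanical, and if needed the quantum Laplace expansion itself can be deduced from the Generalised Quantum Pl\"ucker Relations (Theorem~\ref{theorem-gqpr}) in the same spirit as Lemma~\ref{lemma-qijm} was proved.
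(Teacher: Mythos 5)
Your proposal is correct and follows essentially the same route as the paper: induction on $t$, quantum Laplace expansion of $[I|J]$ along the last row, application of $\rho$ with the inductive hypothesis, quasi-commutation of $[M]^{-1}$ past $[Q(i_t,j_k)]$ contributing $q^{\pm\sign(\widetilde{a+m-i_t},\widetilde{j_k+a+m-1})}$, and finally matching the result against Lemma~\ref{lemma-qijm}. The sign bookkeeping you flag (absorbing the $-1$ prefactor of the lemma into $(-1)^{-\sign}$) is exactly how the paper closes the argument.
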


\begin{proof}
The proof is by induction on $t$. The case $t=1$ is given in 
Theorem~\ref{theorem-dhomiso}. 

Suppose 
that $I =\{i_1, i_2,\dots,i_t\}$ and $J = \{j_1, j_2,\dots,j_t\}$, 
with $t\geq 2$.
Expand $[I|J]$ along its final row, by using \cite[Corollary 4.4.4]{pw},
to obtain 
\[
[I|J] = \sum_{k=1}^t\,(-q)^{t-k}[I\backslash\{i_t\}|J\backslash\{j_k\}]
x_{i_tj_k}.
\]

Now apply $\rho$ to this expression, using the inductive hypothesis on the
quantum minors $[I\backslash\{i_t\}|J\backslash\{j_k\}]$ to obtain
\begin{eqnarray*}
\rho([I|J]) &=& \sum_{k=1}^t\,(-q)^{t-k}
[Q(I\backslash\{i_t\},J\backslash\{j_k\})][M]^{-1}
[Q(i_t,j_k)]
[M]^{-1}
\end{eqnarray*}

Note that the index sets 
$Q(i_t,j_k)= \{\widetilde{(j_k+a+m-1)},\; \widetilde{a}, \ldots,
\widehat{a+m-i_t},\ldots, \widetilde{a+m-1}\}$ 
and 
$M = \{\widetilde{a},
\widetilde{a+1}, \dots, \widetilde{a+m-1}\}$ differ only in the indices 
$\widetilde{(j_k+a+m-1)}$ and $\widetilde{a+m-i_t}$; so that 
\[
[M]^{-1}
[Q(i_t,j_k)]
=
q^{
-\sign(\widetilde{a+m-i_t},\widetilde{j_k+a+m-1})
}
[Q(i_t,j_k)][M]^{-1}. 
\]
Thus, 
\begin{eqnarray}\label{rhoij}
\rho([I|J]) &=&-(\sum_{k=1}^t\,
(-q)^{(t-k) -\sign(\widetilde{a+m-i_t},\widetilde{j_k+a+m-1})}
[Q(I\backslash\{i_t\},J\backslash\{j_k\})]
[Q(x_{ij})])[M]^{-2}\notag 
\end{eqnarray}
However, 
\[
-(\sum_{k=1}^t\,
(-q)^{(t-k) -\sign(\widetilde{a+m-i_t},\widetilde{j_k+a+m-1})}
[Q(I\backslash\{i_t\},J\backslash\{j_k\})]
[Q(x_{ij})])
=
[Q(I,J)][M]
\]
by Lemma~\ref{lemma-qijm}; so
\[
\rho([I|J]) = [Q(I,J)][M][M]^{-2} = [Q(I,J)][M]^{-1} = \curlyminor{Q(I,J)}
\]
as required.
\qed\\\end{proof}

Recall from Section~\ref{section-basicdefs} the definition of an index pair
$(I,J)$ and the corresponding quantum minor $[I\mid J]$ in a fixed quantum
matrix algebra, say $\oq(M_{m,n-m})$. Let $\Delta$ denote the set of index
pairs (or quantum minors).

We put a partial order on $\Delta$ that is denoted by 
$\leq_\st$. Let $u,v$ be integers such that $1 \leq u \leq m$ and 
$1 \leq v \leq n-m$, and  
let 
$(I,J)$ and
$(K,L)$ be index pairs with
$I=\{i_1< \dots <i_u\}, K=\{k_1< \dots <k_v\} \subseteq \{1,\dots,m\}$,
and 
$J=\{j_1< \dots <j_u\},  L=\{l_1< \dots <l_v\} \subseteq \{1,\dots,n-m\}$.  
We define $\leq_\st$ as follows:
\[
(I,J) \leq_\st (K,L) 
\qquad\Longleftrightarrow\qquad 
\left\{
\begin{array}{l}
u \ge v, \cr 
i_s \leq k_s \quad\mbox{for}\quad 1 \leq s \leq v , \cr
j_s \leq l_s \quad\mbox{for}\quad 1 \leq s \leq v .
\end{array}
\right.
\]
In \cite[Theorem 3.5.3]{lr2} it is shown that quantum matrices form a graded
algebra with a straightening law with respect to this order.

Let $[M]= \aqminor$. The previous proposition shows that for each quantum minor
$[I\mid J]$ of $\oq(M_{m,n-m})$ produces, in a natural way, a generating minor
$[Q(I,J)] = \rho([I\mid J])[M]$ of $\oqgmn$. It is easy to check that every
generating minor of $\oqgmn$, apart from $[M]$ 
itself, arises in this way. Thus,
we can use the previous proposition to induce a partial order on $\Pi$, the
set of generating minors of $\oqgmn$. The following combinatorial lemma
identifies this partial order.


\begin{proposition}\label{proposition-st-versus-s}
~~Let $1\leq s\leq n$ and set $a = \widetilde{s}$. Then 
$[I|J]\leq_\st[K|L]$ if and only if $Q(I,J)\les Q(K,L)$. 
\end{proposition}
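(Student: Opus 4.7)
The approach is to translate the cyclic condition $Q(I,J)\les Q(K,L)$ into explicit numerical inequalities on the indices of $I,J,K,L$ and match those to the standard-order definition, using the natural splitting inherent in the definition of $Q$. Set $A(J):=\{\widetilde{j+a+m-1}:j\in J\}$ and $B(I):=M\setminus\{\widetilde{a+m-i}:i\in I\}$, so that $Q(I,J)=A(J)\sqcup B(I)$ with $|A(J)|=t=|I|=|J|$ and $|B(I)|=m-t$. The key preliminary observation is that $A(J)$ and $B(I)$ sit in complementary $\les$-blocks of $\{1,\dots,n\}$: as $j$ ranges over $\{1,\dots,n-m\}$, the value $\widetilde{j+a+m-1}$ ranges over a single cyclic block of size $n-m$ disjoint from $M$, while $M$ itself is a single cyclic block of size $m$. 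Consequently the $\les$-ordered listing of $Q(I,J)$ is obtained by concatenating the $\les$-listings of the two blocks.

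Within each block the $\les$-order matches the standard integer order up to a twist. The $A$-map $j\mapsto\widetilde{j+a+m-1}$ is $\les$-monotone on $\{1,\dots,n-m\}$, while the $B$-map $r\mapsto\widetilde{a+m-r}$ is $\les$-antimonotone on $\{1,\dots,m\}$. Thus with $J=\{j_1<\dots<j_t\}$ and $I^{c}=\{1,\dots,m\}\setminus I=\{r_1<\dots<r_{m-t}\}$ written in standard integer order, the $\les$-listing of $A(J)$ is $\widetilde{j_1+a+m-1}\les\dots\les\widetilde{j_t+a+m-1}$, and the $\les$-listing of $B(I)$ is $\widetilde{a+m-r_{m-t}}\les\dots\les\widetilde{a+m-r_1}$.

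Armed with these listings I would compare $Q(I,J)$ and $Q(K,L)$ entry-by-entry in $\les$. Set $t'=|K|=|L|$ and $K^{c}=\{r'_1<\dots<r'_{m-t'}\}$ in standard order. The $m$ positions break into three zones: an $A$-versus-$A$ zone, where the inequality reduces via the order-preserving $A$-map to $j_s\le l_s$ for $s=1,\dots,\min(t,t')$; a mixed zone, where one side is an $A$-element and the other a $B$-element, and since $A$- and $B$-elements lie on opposite sides of the cyclic order this zone forces a specific inequality between $t$ and $t'$, matching the $\leq_\st$-requirement $|I|\ge|K|$; and a $B$-versus-$B$ zone, where the antimonotone twist converts the inequality into $r_p\ge r'_p$ for $p=1,\dots,m-\max(t,t')$. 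Thus $Q(I,J)\les Q(K,L)$ holds precisely when $t\ge t'$, $j_s\le l_s$ for $s\le t'$, and $r_p\ge r'_p$ for $p\le m-t$.

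The proposition then reduces to the combinatorial equivalence: for $I,K\subseteq\{1,\dots,m\}$ with $|I|=t\ge t'=|K|$, the condition $i_s\le k_s$ for $s\le t'$ is equivalent to $r_p\ge r'_p$ for $p\le m-t$. Both assertions are equivalent to $|I\cap\{1,\dots,x\}|\ge|K\cap\{1,\dots,x\}|$ for every $x\in\{1,\dots,m\}$, a standard complement-inversion identity that I would prove by an elementary case analysis on the interval $(r_{p-1},r_p]$ in which a given $x$ lies. The main obstacle is the positional bookkeeping across the three zones --- in particular reconciling the monotone $A$-map with the antimonotone $B$-map, and correctly identifying the $\les$-extremal block containing $A(J)$ --- but there is no conceptual difficulty, and assembling the pieces yields both implications of the proposition.
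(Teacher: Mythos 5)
Your argument is correct, and it is in essence a worked-out version of what the paper leaves implicit: the paper's ``proof'' is only a pointer to Bruns--Vetter, Lemma 4.9, which is the non-cyclic prototype of exactly this kind of positional comparison, so you are supplying the details rather than taking a genuinely different route. Your key reductions all check out: $Q(I,J)=A(J)\sqcup B(I)$ with $A(J)$ filling part of the cyclic block complementary to $M$ and $B(I)\subseteq M$; the map $j\mapsto\widetilde{j+a+m-1}$ is an order isomorphism onto that block while $r\mapsto\widetilde{a+m-r}$ is an anti-isomorphism onto $M$; the mixed zone forces $|I|\geq |K|$ because elements of $M$ are strictly $\les$-above everything else; and the final complement-inversion step (that, given $t\geq t'$, $i_p\leq k_p$ for $p\leq t'$ is equivalent to $r_p\geq r'_p$ for $p\leq m-t$, both being equivalent to $\#(I\cap\{1,\dots,x\})\geq\#(K\cap\{1,\dots,x\})$ for all $x$) is a true and standard counting identity, proved exactly as you indicate. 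One point worth making explicit: your bookkeeping is valid for the convention in which $M=\aqminor$ is the $\les$-\emph{top} block, i.e.\ $a=\widetilde{s-m}$, which is the convention the paper actually uses in the figure and in the final theorem; with the proposition's literal ``$a=\widetilde{s}$'' the blocks swap, the mixed zone forces $|I|\leq|K|$ instead, and the statement fails (e.g.\ $[12\mid 12]\leq_\st[1\mid 1]$ in $\oq(M_2)$ but $[14]\not\leq_2[24]$ in $\oqgtwofour$), so you should state at the outset that you take $a=\widetilde{s-m}$; that appears to be a slip in the paper's statement rather than a gap in your proof.
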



\begin{proof}
This is similar to the proof of \cite[Lemma 4.9]{bv}
\qed\\
\end{proof}


Note that $[M] =\aqminor$ is the maximal element in the partially ordered set
$\Pi_s$. Figure~\ref{figure-inducedpo} illustrates the previous result 
in $\oqgtwofour$ with
$s=2$.

\begin{figure}[ht]
\ignore{
{\hskip 10ex}
 \parbox{25ex}{
$$\xymatrixrowsep{2.4pc}\xymatrixcolsep{3.2pc}\def\objectstyle{\scriptstyle}
\xymatrix@!0{
 & 1 \edge[d]&\\
 &[2\mid 2] \edge[dl] \edge[dr]\\
[2\mid 1]\edge[dr]&&[1\mid 2]\edge[dl]\\
&[1\mid 1]\edge[d]&\\
&[12\mid 12]&
}
$$
\begin{center}{$\leq_\st$ on  $\oq(M_2)$}\end{center}
}{\hskip 5ex}$\leadsto\leadsto${\hskip 5ex}
 \parbox{25ex}{
$$\xymatrixrowsep{2.4pc}\xymatrixcolsep{3.2pc}\def\objectstyle{\scriptstyle}
\xymatrix@!0{
 & [14] \edge[d]&\\
 &[13] \edge[dl] \edge[dr]\\
[12]\edge[dr]&&[34]\edge[dl]\\
&[24]\edge[d]&\\
&[23]&
}$$
\begin{center}{$\Pi_2$ on $\oqgtwofour$}\end{center}
}}\caption{}
\label{figure-inducedpo}
\end{figure}

We use the previous results to transfer the graded algebra with a
straightening law property from $\oq(M_{m,n-m})$ to $\oqgmn$. The proof is
essentially obtained by reversing the direction of the proof of \cite[Theorem
3.5.3]{lr2}, and, for this reason, we merely sketch the proof.


\begin{theorem}
The quantum grassmannian $\oqgmn$ is a graded quantum algebra with a
straightening law on the poset $\Pi_s$ for each $1\leq s\leq n$. 
\end{theorem}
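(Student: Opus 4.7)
The plan is to pull back the quantum graded algebra with a straightening law structure from $\oq(M_{m,n-m})$, which is known by \cite[Theorem 3.5.3]{lr2}, to $\oqgmn$ equipped with $\Pi_s$. The two key inputs are the dehomogenisation isomorphism $\rho$ of Theorem~\ref{theorem-dhomiso} at the unique $<_s$-maximum $[M] = \aqminor$ of $\Pi_s$, and the order isomorphism of Proposition~\ref{proposition-st-versus-s} between $(\Delta, <_\st)$ and $(\Pi_s \setminus \{[M]\}, <_s)$ given by $[I|J] \mapsto [Q(I,J)]$. Conditions (1) and (2) of Definition~\ref{recall-q-gr-asl} are immediate since every element of $\Pi_s$ is a homogeneous generator of degree~$1$.

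For condition (5) when one of the two generators is $[M]$, Lemma~\ref{lemma-normal} yields $[M]\alpha = c\alpha[M]$ for some $c \in k^\ast$, which is a straightening relation with no lower-order terms. For condition (4) and the remaining cases of (5), fix $\alpha = [Q(I,J)]$ and $\beta = [Q(K,L)]$ in $\Pi_s \setminus\{[M]\}$, with corresponding minors $[I|J], [K|L] \in \Delta$; by Proposition~\ref{proposition-st-versus-s}, incomparability in $<_s$ is equivalent to incomparability in $<_\st$. The straightening law in $\oq(M_{m,n-m})$ then gives
\[
[I|J][K|L] - c_{IJ,KL}[K|L][I|J] = \sum_\nu c_\nu [I_\nu|J_\nu] + \sum_\tau c'_\tau [I'_\tau|J'_\tau][K'_\tau|L'_\tau],
\]
with $[I_\nu|J_\nu], [I'_\tau|J'_\tau] <_\st [I|J], [K|L]$ and $[I'_\tau|J'_\tau] \leq_\st [K'_\tau|L'_\tau]$. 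Applying $\rho$, multiplying through by $[M]^2$, and using Lemma~\ref{lemma-normal} repeatedly to collect $[M]$-factors on the right (with scalar adjustments) transforms this into the identity
\[
\alpha\beta - c'_{\alpha\beta}\beta\alpha = \sum_\nu d_\nu [Q(I_\nu,J_\nu)]\,[M] + \sum_\tau d'_\tau [Q(I'_\tau,J'_\tau)][Q(K'_\tau,L'_\tau)]
\]
in $\oqgmn$. Translating the order conditions via Proposition~\ref{proposition-st-versus-s}, and using that $[M]$ is the $<_s$-maximum (so $[Q(I_\nu,J_\nu)] \leq_s [M]$ is automatic), this is exactly a straightening relation for the pair $\alpha,\beta$, which establishes (4) and (5).

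For condition (3), any standard monomial on $\Pi_s$ has the shape $\gamma_1 \leq_s \cdots \leq_s \gamma_r \leq_s [M]^{d-r}$, since $[M]$ is the unique top. Writing each $\gamma_i = \rho([I_i|J_i])[M]$ and using normality of $[M]$ to gather the $[M]$'s on the right, each such monomial equals $c_\sigma \cdot \rho([I_1|J_1]\cdots[I_r|J_r])\,[M]^d$ for some $c_\sigma \in k^\ast$, where $[I_1|J_1]\cdots[I_r|J_r]$ is a standard monomial on $\Delta$ by Proposition~\ref{proposition-st-versus-s}; moreover distinct $\Pi_s$-standard monomials produce distinct $\Delta$-standard monomials. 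A hypothetical linear dependence among $\Pi_s$-standard monomials therefore yields, after cancelling the non-zero-divisor $[M]^d$ and applying $\rho^{-1}$, a linear dependence among standard monomials in $\oq(M_{m,n-m})$, contradicting their known linear independence. The main technical nuisance throughout is the scalar bookkeeping produced by repeated use of normality of $[M]$; the substantive order-theoretic content is fully encoded in Proposition~\ref{proposition-st-versus-s}, so no further combinatorial work is needed.
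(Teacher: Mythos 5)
Your proposal is correct and follows essentially the same route as the paper: dehomogenise at the $<_s$-maximal consecutive minor via Theorem~\ref{theorem-dhomiso}, use Proposition~\ref{proposition-st-versus-s} to match $(\Delta,\leq_\st)$ with $\Pi_s\setminus\{[M]\}$, and transfer conditions (3)--(5) from $\oq(M_{m,n-m})$, with Lemma~\ref{lemma-normal} handling pairs involving $[M]$. The only (immaterial) difference is in (3), where the paper tracks powers of $[M]$ by passing to the skew-Laurent extension $\oq(M_{m,n-m})[y,y^{-1};\sigma]$ and applying the inverse isomorphism $\theta$, while you gather and cancel $[M]^d$ directly (which tacitly uses the reduction to homogeneous dependences so that $d$ is constant).
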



\begin{proof} There are five conditions in the definition of a 
graded quantum algebra with a straightening law, see
Definition~\ref{recall-q-gr-asl}. Conditions (1) and (2) are immediate; so we
need to check (3), (4) and (5). We use Theorem~\ref{theorem-dhomiso} with 
$a= \widetilde{s-m}$. 

The map $\rho$ of Theorem~\ref{theorem-dhomiso} extends to an isomorphism \[
\rho:\oq(M_{m,n-m})[y,y^{-1};\sigma] \goesto \dhom(\oqgmn, \aqminor) \] with
$\rho(y)=[M]$, cf. \cite[Corollary 4.1]{klr}. Let $\theta$ denote the inverse of
this isomorphism. Note that $y$ quasi-commutes with each of the quantum minors
in $\oq(M_{m,n-m})$. 

Suppose that $[I_1]^{a_1}[I_2]^{a_2}\dots[I_t]^{a_t}[M]^a$ is a standard
monomial with respect to the ordering $<_s$, and suppose that $I_t\neq M$. Let
$\rho([K_i\mid L_i]) = [I_i][M]^{-1}$ for each $i=1,\dots,t$. Then \[
\theta([I_1]^{a_1}[I_2]^{a_2}\dots[I_t]^{a_t}[M]^a) = \qdot[K_1\mid
L_1]^{a_1}[K_2\mid L_2]^{a_2} \dots[K_t\mid L_t]^{a_t}y^{a+\sum a_i}. \] Note
that this image is a non-zero scalar multiple of a term in the standard basis
of $\oq(M_{m,n-m})$ multiplied by a power of $y$. Note also that distinct
$[I_1]^{a_1}[I_2]^{a_2}\dots[I_t]^{a_t}[M]^a$ produce distinct images. Thus, a
linear combination of such terms is mapped to a linear combination of terms
which are linearly independent, and so the standard monomials with respect to
the ordering $<_s$ are linearly independent. This establishes (3). 

Next, suppose that $[I], [J]$ are incomparable with respect to $<_s$. Note
that neither $[I]$ nor $[J]$ is equal to $[M]$, since $[M]$ is the maximal
element of the poset $\Pi_s$. Thus, there are quantum minors $[K\mid L],
[U\mid V]$ with $\rho([K\mid L]) = [I][M]^{-1}$ and $\rho([U\mid V]) =
[I][M]^{-1}$. Note that $[K\mid L]$ and $[U\mid V]$ are incomparable, by
Proposition~\ref{proposition-st-versus-s}. As $\oqmmn$ is a graded quantum
algebra with a straightening law, there is an equation
\[
[K\mid L][U\mid V] =\sum\,\alpha_i[K_i\mid L_i][U_i\mid V_i]
\]
with $\alpha_i\in k$ and $[K_i\mid L_i]<_{\st}[U_i\mid V_i]$ while
$[K_i\mid L_i]<_{\st}[K\mid L], [U\mid V]$. 

Apply $\rho$ to this equation, and cancel $[M]^{-2}$ 
in the resulting equation
to obtain an equation
\[
[I][J]=\sum\,\alpha_i\qdot[I_i][J_i]
\]
and note that $[I_i]<_s[J_i]$ and $[I_i]<_s [I],[J]$ for each $i$, by using 
Proposition~\ref{proposition-st-versus-s}. This establishes (4). 

Finally, suppose that $[I], [J] \in\Pi_s$. If $[I]= [M]$ or $[J]=[M]$ then
these quantum minors quasi-commute; and so (5) is established for this pair.
Otherwise, we argue in a similar manner to the previous paragraph, but this
time using the fact that (5) holds in quantum matrices, to establish (5) for
$\oqgmn$. 

Thus, $\oqgmn$ is a graded quantum algebra with a straightening law with
respect to the poset $\Pi_s$. 
\qed \end{proof} 



\vskip 1cm
\noindent 
T H Lenagan: 
\\
Maxwell Institute for Mathematical Sciences,\\
School of Mathematics, University of Edinburgh,\\
James Clerk Maxwell Building, 
King's Buildings, 
Mayfield Road,\\
Edinburgh EH9 3JZ, Scotland, UK\\[2ex]
tom@maths.ed.ac.uk\\~\\
E J Russell:\\
Faculty of Engineering and Computing\\
Coventry University\\ Coventry CV1 5FB
\\[2ex]
ewan.russell@coventry.ac.uk 
\end{document}